\newtheorem{theorem}{Theorem}[section]
\newtheorem{lemma}[theorem]{Lemma}
\newtheorem{remark}[theorem]{Remark}
\newtheorem{proposition}[theorem]{Proposition}
\newtheorem{definition}[theorem]{Definition}
\newtheorem{example}[theorem]{Example}
\begin{document}
	
	\title
	{Twist polynomial as a weight system for set systems}
	
	\author{Qingying Deng\\
		{\small School of Mathematics and Computational Science, Xiangtan University, P. R. China}\\
		Xian'an Jin\\
		{\small School of Mathematical Sciences, Xiamen University, P. R. China}\\
		Qi Yan\footnote{Corresponding author.}\\
		{\small School of Mathematics and Statistics, Lanzhou University, P. R. China}\\
		{\small{\t Email: qingying@xtu.edu.cn (Q. Deng), xajin@xmu.edu.cn (X. Jin), yanq@lzu.edu.cn (Q. Yan)}}
	}
	\date{}
	
	\maketitle
	
	\begin{abstract}
		
		Recently, Chmutov proved that the partial-dual polynomial considered as a function on chord diagrams satisfies the four-term relation. Deng et al. then proved that this function on framed chord diagrams also satisfies the four-term relation, i.e., is a framed weight system. In this paper, we extend their results to the twist polynomial of a set system by proving that the twist polynomial on set systems satisfies the four-term relation.
		
	\end{abstract}
	
	$\mathbf{keywords:} $ Set system, four-term relation, handle sliding, exchange handle ends

	\section{Introduction}
	
	The concept of partial duality was introduced by Chmutov in \cite{Chmutov} as a generalization of the notion of geometric duality of a ribbon graph. Gross, Mansour and Tucker \cite{Gross} used this concept of partial duality to define the partial-dual (genus) polynomial, encoding the Euler genera of all partial duals of a ribbon graph.
	The concept of twist (width) polynomial of a delta-matroid was introduced in 2022 by Yan and Jin in \cite{Yan}, as a generalization of the notion of partial-dual (genus) polynomial $^{\partial}\varepsilon_{G}(z)$ of a ribbon graph $G$.
	
	Recently, Chmutov \cite{Chmutov2023} proved that the partial-dual polynomial considered as a function on chord diagrams satisfies the four-term relation. Deng et al. \cite{Deng} then proved that this function on framed chord diagrams also satisfies the four-term relation, i.e., is a framed weight system.	
	In this paper, we extend their results to the twist polynomial of a set system by proving that twist polynomial of set systems satisfies the four-term relation. We also discuss the application of the four-term relation to binary delta-matroids and ribbon graphs.
	
	\section{Preliminaries}
	
	A set system $D=(E,\mathcal{F})$ is a finite set $E$ together with a subset $\mathcal{F}$ of the set $2^{E}$ of all subsets in $E$. The set $E$ is called the \emph{ground set} of the set system, and elements of $\mathcal{F}$ are its \emph{feasible sets}. $D$ is \emph{proper} if $\mathcal{F}\neq \emptyset$. Below, we consider only proper set systems without explicitly indicating this.
	
	As introduced by Bouchet in \cite{AB1}, a \emph{delta-matroid} is a set system $D=(E, \mathcal{F})$ such that if $X, Y \in \mathcal{F}$ and $u\in X\Delta Y$, then there is $v\in X\Delta Y$ (possibly  $v=u$ ) such that $X\Delta \{u, v\}\in \mathcal{F}$.  Here $$X\Delta Y:=(X\cup Y)-(X\cap Y)$$  is the usual symmetric difference of sets.
	
	Let $D=(E, \mathcal{F})$ be a set system. An element $e\in E$ contained in every feasible set of $D$ is said to be a \emph{coloop}, while an element $e\in E$ contained in no feasible set of $D$ is said to be a \emph{loop}. For $A\subseteq E$, the \emph{twist} of $D$ with respect to $A$, denoted by $D*A$, is given by $$(E, \{A\Delta X: X\in \mathcal{F}\}).$$
	
	Let $D=(E, \mathcal{F})$ be a set system and $e\in E$. Then $D$ \emph{delete} by $e$, denoted $D\backslash e$, is defined as $D\backslash e:=(E\backslash e, \mathcal{F}')$, where
	\[\mathcal{F}':=\left\{\begin{array}{ll}
		\{F: F\in \mathcal{F}, F\subseteq E\backslash e\}, & \text{if $e$ is not a coloop,}\\
		
		\{F\backslash e: F\in \mathcal{F}\}, & \text{if $e$ is a coloop}.
	\end{array}\right.\]
	$D$ \emph{contract} by $e$, denoted $D/e$, is defined as $D/e:=(D*e)\backslash e$.
	
	For a finite set $E$, let $C$ be a symmetric $|E|$ by $|E|$ matrix over $GF(2)$, with rows and columns indexed, in the same order, by the elements of $E$. Let $C[A]$ be the principal submatrix of $C$ induced by the set $A\subseteq E$. We define the set system $D(C)=(E, \mathcal{F})$ with $$\mathcal{F}:=\{A\subseteq E: C[A] \mbox{ is non-singular}\}.$$  By convention $C[\emptyset]$ is non-singular. Then $D(C)$ is a delta-matroid \cite{AB4}. A delta-matroid is said to be \emph{binary} if it has a twist that is isomorphic to $D(C)$ for some symmetric matrix $C$ over $GF(2)$.
	
	For a set system $D=(E, \mathcal{F})$, let $r(D_{max})$  and $r(D_{min})$ denote the sizes of largest and  smallest feasible sets of $D$, respectively. The  \emph{width} of $D$, denote by $w(D)$, is defined by $$w(D):=r(D_{max})-r(D_{min}).$$

The twist polynomial introduced in \cite{Yan} for delta-matroids in fact can be defined for set systems.	
	
	\begin{definition}\cite{Yan}
		The twist polynomial $\partial_{\omega_{D}}(z)$ of a set system $D = (E, \mathcal{F})$ encodes the widths of
		its all twists, and is defined as follows:	
		\[\partial_{\omega_{D}}(z)=\sum_{A\subseteq E}z^{\omega(D*A)}.\]
	\end{definition}

	Let $D=(E, \mathcal{F})$ be a set system, and $a, b\in E$ with $a\neq b$.  One can define both the first Vassiliev move (handle sliding) and the second Vassiliev move (exchange handle ends) for set systems as follows.

	\begin{definition}\cite{Moffatt}\label{def01}
		The result of handle sliding of the element $a$ over the element $b$ is the set system $\widetilde{D}_{ab}=(E, \widetilde{\mathcal{F}}_{ab})$, where $$\widetilde{\mathcal{F}}_{ab}=\mathcal{F}\triangle \{F\cup a| F\cup b\in \mathcal{F} ~and~ F\subseteq  E\backslash  \{a,b\}\}.$$
	\end{definition}
	
	Note that $\widetilde{D}_{ab}$ may not be equal to $\widetilde{D}_{ba}$ since the set $\{F\cup a| F\cup b\in \mathcal{F} ~and~ F\subseteq  E\backslash  \{a,b\}\}$  may not be equal to the set $\{F\cup b| F\cup a\in \mathcal{F} ~and~ F\subseteq  E\backslash  \{b,a\}\}$.

	\begin{definition}\cite{Lando}\label{def02}
		The result of exchanging handle ends of the element $a$ and the element $b$ is the set system ${D'}_{ab}=(E, {\mathcal{F'}}_{ab})$, where $${\mathcal{F'}}_{ab}=\mathcal{F}\triangle \{F\cup \{a,b\}| F\in \mathcal{F} ~and~ F\subseteq  E\backslash  \{a,b\}\}.$$
	\end{definition}

	Since $\{F\cup \{a,b\}| F\in \mathcal{F} ~and~ F\subseteq  E\backslash  \{a,b\}\}=\{F\cup \{b,a\}| F\in \mathcal{F} ~and~ F\subseteq  E\backslash  \{b,a\}\},$ it follows that ${D'}_{ab}={D'}_{ba}$.

	\begin{proposition}\cite{Lando}\label{pro1} The handle sliding and the exchange handle ends for set systems possess the following properties.
		\begin{enumerate}
			\item [(1)] The handle sliding is an involution, $\widetilde{(\widetilde{D}_{ab})}_{ab}=D$;
			\item [(2)] The exchange handle ends is an involution, $(D'_{ab})'_{ab}=D$;
			\item [(3)] The handle sliding and the exchange handle ends commute, $(\widetilde{D}_{ab})'_{ab}=\widetilde{(D'_{ab})}_{ab}$. 
		\end{enumerate}
		
	\end{proposition}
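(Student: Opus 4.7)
The plan is to encode both moves as symmetric differences with an explicit ``correction set'' and to exploit the fact that subsets of $E$ split naturally, according to their intersection with $\{a,b\}$, into four disjoint classes: those containing neither element, only $b$, only $a$, and both. Writing $S(\mathcal{F}):=\{F\cup a\mid F\cup b\in\mathcal{F},\ F\subseteq E\setminus\{a,b\}\}$ for the correction appearing in Definition~\ref{def01} and $T(\mathcal{F}):=\{F\cup\{a,b\}\mid F\in\mathcal{F},\ F\subseteq E\setminus\{a,b\}\}$ for the correction appearing in Definition~\ref{def02}, one sees immediately that $S(\mathcal{F})$ is contained in the ``contains $a$, not $b$'' class and is completely determined by the ``contains $b$, not $a$'' class of $\mathcal{F}$, while $T(\mathcal{F})$ is contained in the ``contains both'' class and is completely determined by the ``contains neither'' class of $\mathcal{F}$. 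The disjointness of the source and target classes of each correction is the decisive structural fact.

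The crucial consequence is the collection of stability identities $S(\widetilde{\mathcal{F}}_{ab})=S(\mathcal{F})$, $T(\widetilde{\mathcal{F}}_{ab})=T(\mathcal{F})$, $S(\mathcal{F}'_{ab})=S(\mathcal{F})$, and $T(\mathcal{F}'_{ab})=T(\mathcal{F})$. Each holds for the same reason: neither move alters the class on which the relevant correction depends, since handle sliding only perturbs the ``contains $a$, not $b$'' class and exchange handle ends only perturbs the ``contains both'' class. With these identities in hand, all three claims reduce to one-line applications of the symmetric-difference identities $X\triangle Y\triangle Y=X$ and $X\triangle Y=Y\triangle X$: part (1) is $\widetilde{\mathcal{F}}_{ab}\triangle S(\widetilde{\mathcal{F}}_{ab})=(\mathcal{F}\triangle S(\mathcal{F}))\triangle S(\mathcal{F})=\mathcal{F}$; part (2) is the analogous calculation with $T$ in place of $S$; and for part (3), both $(\widetilde{\mathcal{F}}_{ab})'_{ab}$ and $\widetilde{(\mathcal{F}'_{ab})}_{ab}$ simplify to $\mathcal{F}\triangle S(\mathcal{F})\triangle T(\mathcal{F})$ after applying the stability identities, so commutativity of $\triangle$ closes the argument.

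I do not anticipate a genuine obstacle. The only step that deserves care is the verification of the four stability identities, which is a short combinatorial check once the type partition has been fixed; beyond that, the proof is essentially a bookkeeping exercise organised around the observation that handle sliding and exchange handle ends modify orthogonal classes of subsets of $E$.
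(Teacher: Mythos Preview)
Your argument is correct. The type-partition framing is clean: once you note that $S(\mathcal{F})$ is determined by the ``$b$-only'' class and lands in the ``$a$-only'' class, while $T(\mathcal{F})$ is determined by the ``neither'' class and lands in the ``both'' class, the four stability identities are immediate and the three assertions drop out of the group law for $\triangle$.

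On comparison: the paper does not actually prove Proposition~\ref{pro1}; it is quoted from \cite{Lando}. What the paper does do is re-derive part~(3) in Proposition~\ref{commute} by an explicit element-chasing case analysis, whose content is precisely your stability identity $S(\mathcal{F}'_{ab})=S(\mathcal{F})$ (and its twin $T(\widetilde{\mathcal{F}}_{ab})=T(\mathcal{F})$), verified one case at a time rather than via the type partition. Both routes arrive at the same explicit formula $\widetilde{\mathcal{F}'}_{ab}=\mathcal{F}\triangle S(\mathcal{F})\triangle T(\mathcal{F})$ for the composite feasible family. Your presentation is more conceptual and has the advantage of yielding parts~(1) and~(2) for free from the same mechanism, whereas the paper's case analysis targets only the commutation statement; the paper's version, on the other hand, makes the set-by-set verification completely transparent without requiring the reader to internalise the four-class decomposition.
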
	
	
	Based on Definitions \ref{def01} and \ref{def02}, we can give a detailed explanation of $(\widetilde{D}_{ab})'_{ab}=\widetilde{(D'_{ab})}_{ab}$ (abbreviated as $\widetilde{D}'_{ab}$ and $\widetilde{D'}_{ab}$, respectively) as follows.
	
	\begin{proposition}\label{commute}
		The result of first exchanging handle ends and then sliding handle of the element $a$ over the element $b$ is the set system ${\widetilde{D'}}_{ab}=(E, {\mathcal{\widetilde{F'}}}_{ab})$, where $${\mathcal{\widetilde{F'}}}_{ab}=\mathcal{F}\triangle \{F\cup \{a,b\}| F\in \mathcal{F} ~and~ F\subseteq  E\backslash  \{a,b\}\}\triangle \{F\cup \{a\}| F\cup \{b\}\in \mathcal{F} ~and~ F\subseteq  E\backslash  \{a,b\}\}.$$
		Similarly, the result of first sliding handle and then exchanging handle ends of the element $a$ and the element $b$ is the set system ${\widetilde{D}'}_{ab}={\widetilde{D'}}_{ab}$.
	\end{proposition}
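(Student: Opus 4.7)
The plan is to prove the proposition by direct bookkeeping: apply the two definitions in each order, and verify that in both cases the resulting feasible family agrees with the triple symmetric difference displayed in the statement.

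First I would compute $\widetilde{D'}_{ab}$. Starting from $D'_{ab}$, Definition \ref{def02} gives
\[
\mathcal{F}'_{ab} = \mathcal{F} \triangle \{F\cup\{a,b\} : F\in\mathcal{F},\ F\subseteq E\setminus\{a,b\}\}.
\]
Now apply Definition \ref{def01} to $D'_{ab}$. The handle sliding modifies the family by the set $\{F\cup a : F\cup b\in \mathcal{F}'_{ab},\ F\subseteq E\setminus\{a,b\}\}$. The key observation is that for $F\subseteq E\setminus\{a,b\}$, the set $F\cup b$ does not contain $a$, whereas every element that the exchange step inserts into or removes from $\mathcal{F}$ is of the form $G\cup\{a,b\}$, which does contain $a$. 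Therefore membership of $F\cup b$ in $\mathcal{F}'_{ab}$ coincides with membership in $\mathcal{F}$, and the handle-sliding modifier for $D'_{ab}$ is literally $\{F\cup a : F\cup b\in\mathcal{F},\ F\subseteq E\setminus\{a,b\}\}$. Combining the two modifications yields exactly the triple symmetric difference in the statement.

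Next I would compute $\widetilde{D}'_{ab}$ the other way around. Starting from $\widetilde{D}_{ab}$, we have
\[
\widetilde{\mathcal{F}}_{ab} = \mathcal{F} \triangle \{F\cup a : F\cup b\in\mathcal{F},\ F\subseteq E\setminus\{a,b\}\}.
\]
Applying Definition \ref{def02} modifies this by $\{F\cup\{a,b\} : F\in\widetilde{\mathcal{F}}_{ab},\ F\subseteq E\setminus\{a,b\}\}$. Here the dual observation applies: if $F\subseteq E\setminus\{a,b\}$, then $F$ contains neither $a$ nor $b$, while handle sliding only inserts or removes sets of the form $G\cup a$, all of which contain $a$. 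Hence $F\in\widetilde{\mathcal{F}}_{ab}\iff F\in\mathcal{F}$, and the exchange modifier reduces to $\{F\cup\{a,b\} : F\in\mathcal{F},\ F\subseteq E\setminus\{a,b\}\}$. Using commutativity and associativity of $\triangle$ we obtain the same triple symmetric difference, so $\widetilde{D}'_{ab} = \widetilde{D'}_{ab}$.

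The calculation itself is routine; the only subtle point, and the step I expect to require the most care in writing, is precisely the stability argument used twice above, namely that the triggering conditions of one operation (``$F\cup b\in\mathcal{F}$'' for sliding, and ``$F\in\mathcal{F}$'' for exchanging, each with $F\subseteq E\setminus\{a,b\}$) are insensitive to the modifications made by the other operation, because the two modifiers live on disjoint ``slots'' determined by whether or not $a$ belongs to the set in question. Once this is made explicit, both computations collapse to the claimed identity and part (3) of Proposition \ref{pro1} is recovered as a corollary.
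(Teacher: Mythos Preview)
Your proof is correct and rests on the same key observation as the paper's: the trigger conditions of each operation (``$F\cup b\in\mathcal{F}$'' and ``$F\in\mathcal{F}$'' with $F\subseteq E\setminus\{a,b\}$) involve only sets not containing $a$, while the other operation modifies only sets that do contain $a$, so the modifier set is unchanged when computed relative to the already-modified family. The paper establishes this via an element-by-element case analysis of the resulting symmetric differences, whereas you isolate the ``disjoint slots'' observation up front and conclude directly; the argument is the same, your write-up is simply more streamlined.
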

	
	\begin{proof}
		It is sufficient to prove that	
		\begin{eqnarray*}
			& &\mathcal{F'}_{ab}\triangle \{F\cup \{a\}| F\cup \{b\}\in \mathcal{F'}_{ab} ~and~ F\subseteq  E\backslash  \{a,b\}\}\\
			&=&\mathcal{F'}_{ab}\triangle \{F\cup \{a\}| F\cup \{b\}\in \mathcal{F} ~and~ F\subseteq  E\backslash  \{a,b\}\}.
		\end{eqnarray*}
		
		For any $\widehat{F}\in 	\mathcal{F'}_{ab}\backslash \{F\cup \{a\}| F\cup \{b\}\in \mathcal{F'}_{ab} ~and~ F\subseteq  E\backslash  \{a,b\}\}$,

		\begin{enumerate}
			
			\item [(1)] if $a\in \widehat{F}$ and $b\notin \widehat{F}$ and $(\widehat{F}\backslash a) \cup b\in  \mathcal{F'}_{ab}$, then $\widehat{F}\in \{F\cup \{a\}| F\cup \{b\}\in \mathcal{F'}_{ab} ~and~ F\subseteq  E\backslash  \{a,b\}\}$, a contradiction.
			\item [(2)] if $a\notin \widehat{F}$, then $\widehat{F}\notin \{F\cup \{a\}| F\cup \{b\}\in \mathcal{F} ~and~ F\subseteq  E\backslash  \{a,b\}\}$.
			\item [(3)] if $b\in \widehat{F}$, then $\widehat{F}\notin \{F\cup \{a\}| F\cup \{b\}\in \mathcal{F} ~and~ F\subseteq  E\backslash  \{a,b\}\}$.
			\item [(4)] if $a\in \widehat{F}$ and $b\notin \widehat{F}$ and $(\widehat{F}\backslash a) \cup b\notin  \mathcal{F'}_{ab}$, then $(\widehat{F}\backslash a) \cup b\notin \mathcal{F}$. Hence $\widehat{F}\notin \{F\cup \{a\}| F\cup \{b\}\in \mathcal{F} ~and~ F\subseteq  E\backslash  \{a,b\}\}$.
		\end{enumerate}
		Therefore $\widehat{F}\in \mathcal{F'}_{ab}\triangle \{F\cup \{a\}| F\cup \{b\}\in \mathcal{F} ~and~ F\subseteq  E\backslash  \{a,b\}\}$ for items (2)-(4).
		
		For any $\widehat{F}\in 	\{F\cup \{a\}| F\cup \{b\}\in \mathcal{F'}_{ab} ~and~ F\subseteq  E\backslash  \{a,b\}\}\backslash\mathcal{F'}_{ab}$, then $a\in \widehat{F}$, $b\notin \widehat{F}$ and  $\widehat{F}\backslash \{a\}\cup b\in \mathcal{F'}_{ab}=\mathcal{F}\triangle \{F\cup \{a,b\}| F\in \mathcal{F} ~and~ F\subseteq  E\backslash  \{a,b\}\}$. Hence $\widehat{F}\backslash \{a\}\cup b\in  \mathcal{F}$. Thus $\widehat{F}\in \{F\cup \{a\}| F\cup \{b\}\in \mathcal{F} ~and~ F\subseteq  E\backslash  \{a,b\}\}$. Therefore $\widehat{F}\in \mathcal{F'}_{ab}\triangle \{F\cup \{a\}| F\cup \{b\}\in \mathcal{F} ~and~ F\subseteq  E\backslash  \{a,b\}\}$.
		
		Consequently,
		\begin{eqnarray*}
			& &\mathcal{F'}_{ab}\triangle \{F\cup \{a\}| F\cup \{b\}\in \mathcal{F'}_{ab} ~and~ F\subseteq  E\backslash  \{a,b\}\}\\
			&\subseteq&\mathcal{F'}_{ab}\triangle \{F\cup \{a\}| F\cup \{b\}\in \mathcal{F} ~and~ F\subseteq  E\backslash  \{a,b\}\}.
		\end{eqnarray*}
		
		Similarly, we can prove that 
		\begin{eqnarray*}
			&&\mathcal{F'}_{ab}\triangle \{F\cup \{a\}| F\cup \{b\}\in \mathcal{F} ~and~ F\subseteq  E\backslash  \{a,b\}\}\\
			&\subseteq&\mathcal{F'}_{ab}\triangle \{F\cup \{a\}| F\cup \{b\}\in \mathcal{F'}_{ab} ~and~ F\subseteq  E\backslash  \{a,b\}\}.
		\end{eqnarray*}
		
		It follows that $${\mathcal{\widetilde{F'}}}_{ab}=\mathcal{F}\triangle \{F\cup \{a,b\}| F\in \mathcal{F} ~and~ F\subseteq  E\backslash  \{a,b\}\}\triangle \{F\cup \{a\}| F\cup \{b\}\in \mathcal{F} ~and~ F\subseteq  E\backslash  \{a,b\}\}.$$
		
		By a similar argument, we can prove that $${\mathcal{\widetilde{F}'}}_{ab}=\mathcal{F}\triangle \{F\cup \{a\}| F\cup \{b\}\in \mathcal{F} ~and~ F\subseteq  E\backslash  \{a,b\}\}\triangle \{F\cup \{a,b\}| F\in \mathcal{F} ~and~ F\subseteq  E\backslash  \{a,b\}\}.$$	
		
		Hence	${\widetilde{D}'}_{ab}={\widetilde{D'}}_{ab}$.
	\end{proof}

	\section{Twist polynomial as a weight system}
	
	First, we observe a close relationship between $ \mathcal{F}(D)$ and $\mathcal{F}(\widetilde{D}_{ab})$ as follows.
	
	\begin{lemma}\label{relation1}
		If	$\widehat{F}\in \mathcal{F}(D)\Delta \mathcal{F}(\widetilde{D}_{ab})$, then $a\in\widehat{F}$, $b\notin\widehat{F}$ and  $(\widehat{F}\backslash a) \cup b\in  \mathcal{F}(D)\cap \mathcal{F}(\widetilde{D}_{ab})$.
	\end{lemma}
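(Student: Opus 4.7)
The statement is essentially a direct unpacking of Definition~\ref{def01}, so the plan is to push everything through the symmetric-difference calculus rather than argue case by case.

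My first step is to observe that since $\triangle$ is the group operation of $(2^{2^E},\triangle)$, the definition $\widetilde{\mathcal{F}}_{ab}=\mathcal{F}\triangle S$, where
\[
S:=\{F\cup a\mid F\cup b\in\mathcal{F}(D)\text{ and }F\subseteq E\setminus\{a,b\}\},
\]
immediately yields $\mathcal{F}(D)\triangle\mathcal{F}(\widetilde{D}_{ab})=S$. So the hypothesis $\widehat{F}\in\mathcal{F}(D)\triangle\mathcal{F}(\widetilde{D}_{ab})$ is equivalent to $\widehat{F}\in S$, i.e.\ $\widehat{F}=F\cup a$ for some $F\subseteq E\setminus\{a,b\}$ with $F\cup b\in\mathcal{F}(D)$. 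From the shape $\widehat{F}=F\cup a$ with $F\subseteq E\setminus\{a,b\}$ I read off $a\in\widehat{F}$ and $b\notin\widehat{F}$, which is the first half of the conclusion.

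For the second half, I would compute $(\widehat{F}\setminus a)\cup b=F\cup b$. Membership in $\mathcal{F}(D)$ is built into the definition of $S$. For membership in $\mathcal{F}(\widetilde{D}_{ab})=\mathcal{F}(D)\triangle S$, the key observation is that every element of $S$ contains $a$ and omits $b$; consequently $F\cup b$, which contains $b$, cannot lie in $S$. Therefore $F\cup b\in\mathcal{F}(D)\triangle S$ iff $F\cup b\in\mathcal{F}(D)$, and the latter holds. This gives $(\widehat{F}\setminus a)\cup b\in\mathcal{F}(D)\cap\mathcal{F}(\widetilde{D}_{ab})$.

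There is really no hard step here; the only mild subtlety worth being explicit about is why the ``toggled'' set $S$ does not interfere with itself, and that is settled by the containment/omission pattern of $a$ and $b$ in its members. I would present the argument in two short paragraphs, the first identifying $\mathcal{F}(D)\triangle\mathcal{F}(\widetilde{D}_{ab})$ with $S$ and extracting the $a,b$ conditions, and the second verifying that the ``partner'' $F\cup b$ is fixed by the toggle and hence lies in both feasibility families.
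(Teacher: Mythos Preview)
Your argument is correct and follows essentially the same route as the paper: identify $\mathcal{F}(D)\triangle\mathcal{F}(\widetilde{D}_{ab})$ with the toggle set $S$ via the cancellation $\mathcal{F}\triangle(\mathcal{F}\triangle S)=S$, read off $a\in\widehat{F}$, $b\notin\widehat{F}$, and then observe that the partner $(\widehat{F}\setminus a)\cup b$ lies in $\mathcal{F}(D)$ by construction and in $\mathcal{F}(\widetilde{D}_{ab})$ because it contains $b$ and hence cannot belong to $S$. The only cosmetic difference is that you name $S$ and phrase the last step as ``$F\cup b\notin S$ implies $F\cup b\in\mathcal{F}(D)\triangle S$'', whereas the paper writes out the same implication directly.
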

	
	\begin{proof}
		Since
		\begin{eqnarray*}
			\widehat{F}\in \mathcal{F}(D)\Delta \mathcal{F}(\widetilde{D}_{ab})&=&
			\mathcal{F}(D)\Delta (\mathcal{F}(D)\Delta \{F\cup \{a\}| F\cup \{b\}\in \mathcal{F}(D) ~and~ F\subseteq  E\backslash  \{a,b\}\})\\
			&=&\{F\cup \{a\}| F\cup \{b\}\in \mathcal{F}(D) ~and~ F\subseteq  E\backslash  \{a,b\}\},
		\end{eqnarray*}
		it follows that $a\in \widehat{F}$, $b\notin \widehat{F}$ and $(\widehat{F}\backslash a) \cup b\in  \mathcal{F}(D)$. Since $$(\widehat{F}\backslash a) \cup b\notin  \{F\cup \{a\}| F\cup \{b\}\in \mathcal{F}(D) ~and~ F\subseteq  E\backslash  \{a,b\}\},$$ we have $(\widehat{F}\backslash a) \cup b\in \mathcal{F}(\widetilde{D}_{ab})$. Thus $(\widehat{F}\backslash a) \cup b\in  \mathcal{F}(D)\cap \mathcal{F}(\widetilde{D}_{ab})$.	
	\end{proof}
	
The following proposition is based on the relationship between $\mathcal{F}(D)$ and $\mathcal{F}(\widetilde{D}_{ab})$ in Lemma \ref{relation1}.
	
	\begin{proposition}\label{1stpair}
		Let $A\subseteq E$ and $|A\cap\{a, b\}|=0~\text{or}~2$. Then $\omega(D\ast A)=\omega(\widetilde{D}_{ab}\ast A)$.
	\end{proposition}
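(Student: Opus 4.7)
The strategy is to reduce the claim to the sub-claim that handle sliding preserves the width of any set system, and then to show that twisting by $A$ (when $|A\cap\{a,b\}|\in\{0,2\}$) intertwines handle sliding on $D$ with handle sliding on $D\ast A$.

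For the sub-claim $\omega(D)=\omega(\widetilde D_{ab})$, I would argue as follows. By Lemma \ref{relation1}, any $\widehat F\in \mathcal{F}(D)\,\triangle\,\mathcal{F}(\widetilde D_{ab})$ has a ``partner'' $\widehat F':=(\widehat F\setminus a)\cup b$ of the same cardinality that lies in $\mathcal{F}(D)\cap\mathcal{F}(\widetilde D_{ab})$. Hence every feasible set of $D$ has a same-size counterpart in $\mathcal{F}(\widetilde D_{ab})$ (either itself, if it sits in the intersection, or its partner), and vice versa. Taking the largest and smallest feasible sets forces $r(D_{\max})=r((\widetilde D_{ab})_{\max})$ and $r(D_{\min})=r((\widetilde D_{ab})_{\min})$, so the widths coincide.

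For the reduction, the map $X\mapsto A\triangle X$ is a bijection on $2^E$ that commutes with symmetric differences of families, so
\[
\mathcal{F}\bigl((\widetilde D_{ab})\ast A\bigr)=\mathcal{F}(D\ast A)\,\triangle\,\{A\triangle X:X\in S\},
\]
where $S=\{F\cup\{a\}:F\cup\{b\}\in\mathcal{F}(D),\ F\subseteq E\setminus\{a,b\}\}$ is the defining family of $\widetilde D_{ab}$. A short case analysis on $|A\cap\{a,b\}|$ then yields
\[
(\widetilde D_{ab})\ast A=\begin{cases}\widetilde{(D\ast A)}_{ab}, & A\cap\{a,b\}=\emptyset,\\ \widetilde{(D\ast A)}_{ba}, & A\cap\{a,b\}=\{a,b\},\end{cases}
\]
the role swap in the second case arising because $a$ and $b$ exchange under symmetric difference with a set containing both. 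Combining this with the sub-claim applied to $D\ast A$ gives $\omega((\widetilde D_{ab})\ast A)=\omega(D\ast A)$ in either case.

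The main obstacle is the case analysis in the reduction step: tracking how each element of $S$ transforms under $X\mapsto A\triangle X$ and recognizing the resulting family as the defining set of either the $ab$- or $ba$-handle slide on $D\ast A$. Once this compatibility is established, the sub-claim extracted from Lemma \ref{relation1} finishes both cases uniformly; the hypothesis $|A\cap\{a,b\}|\in\{0,2\}$ is exactly what is needed for the twist to preserve $\{a,b\}$ set-wise, so that handle-slide structure survives.
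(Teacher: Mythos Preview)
Your proof is correct but follows a genuinely different route from the paper's. The paper argues directly: for a given $A$ with $|A\cap\{a,b\}|\in\{0,2\}$, it picks a feasible set $F'$ of $\widetilde D_{ab}$ realising $r((\widetilde D_{ab}\ast A)_{\min})$, and if $F'\notin\mathcal{F}(D)$ it uses Lemma~\ref{relation1} to replace $F'$ by $(F'\setminus a)\cup b\in\mathcal{F}(D)$ and checks, via a two-case computation, that $|(F'\setminus a)\cup b\,\triangle\,A|=|F'\triangle A|$; this gives $r((D\ast A)_{\min})\le r((\widetilde D_{ab}\ast A)_{\min})$, and the involution finishes. You instead prove the special case $A=\emptyset$ first (which is exactly the paper's argument with $A=\emptyset$) and then establish the structural identity $(\widetilde D_{ab})\ast A=\widetilde{(D\ast A)}_{ab}$ or $\widetilde{(D\ast A)}_{ba}$ according as $A\cap\{a,b\}$ is empty or $\{a,b\}$, reducing the general statement to the special one. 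Your approach isolates an independently interesting commutation relation between twisting and handle sliding, and explains conceptually why the hypothesis $|A\cap\{a,b\}|\in\{0,2\}$ is needed (it is what keeps $\{a,b\}$ setwise fixed under $X\mapsto A\triangle X$); the paper's approach is shorter and avoids verifying that the transformed family $\{A\triangle X:X\in S\}$ matches the defining family of a handle slide on $D\ast A$.
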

	
	\begin{proof}
		
		If there exists $F' \in \mathcal{F}(\widetilde{D}_{ab})\backslash \mathcal{F}(D)$ such that $r((\widetilde{D}_{ab}\ast A)_{min})=|F'\triangle A|$. Then $a\in F'$, $b\notin F'$ and $(F'\backslash a) \cup b\in \mathcal{F}(D)\cap \mathcal{F}(\widetilde{D}_{ab})$ by Lemma \ref{relation1}.
		\begin{description}
			\item[Case 1.] If $A\subseteq E\backslash  \{a,b\}$, then
			\[|(F'\backslash a) \cup b \triangle A|
			=|(F'\backslash a)  \triangle A|+1=|F'\triangle A|.\]
			\item[Case 2.] If $A\subseteq E$ and $a, b\in A$, then
			\[|(F'\backslash a) \cup b \triangle A|=|(F'\backslash a)  \triangle A|-1=(|F'\triangle A|+1)-1=|F'\triangle A|.\]
		\end{description}
		It follows that $$r((D\ast A)_{min})\leq |(F'\backslash a) \cup b \triangle A|=|F'\triangle A|=r((\widetilde{D}_{ab}\ast A)_{min}).$$
		Otherwise, there exists $F'\in \mathcal{F}(\widetilde{D}_{ab})\cap \mathcal{F}(D)$ such that $r((\widetilde{D}_{ab}\ast A)_{min})=|F'\triangle A|$. Then $$r((D\ast A)_{min})\leq |F'\triangle A|= r((\widetilde{D}_{ab}\ast A)_{min}).$$
		
		Since the handle sliding operation is involution, we have that $r((\widetilde{D}_{ab}\ast A)_{min})\leq r((D\ast A)_{min}).$
		Thus $$r((D\ast A)_{min})= r((\widetilde{D}_{ab}\ast A)_{min}).$$ By a similar argument, we can obtain that $$r((D\ast A)_{max})= r((\widetilde{D}_{ab}\ast A)_{max}).$$ Therefore  $$\omega(D\ast A)=\omega(\widetilde{D}_{ab}\ast A).$$
	\end{proof}
	
	\begin{remark}
		If $A=\emptyset$, then $\omega(D)=\omega(\widetilde{D}_{ab})$ by Proposition \ref{1stpair}, that is, the handle sliding operation does not change the width. But the exchange handle ends operation may change the width. For example, let $D=(\{a, b\}, \{\emptyset\})$. Then $D'_{ab}=(\{a, b\}, \{\emptyset, \{a, b\} \})$, but $\omega(D)\neq \omega(D'_{ab})$.
	\end{remark}
	
	Analysis similar to that in Lemma \ref{relation1} shows
	that there is a relationship between $\mathcal{F}(D)$ and $\mathcal{F}(D'_{ab})$ as follows.
	
	\begin{lemma}\label{relation2}
	
		If $\widehat{F}\in \mathcal{F}(D)\Delta\mathcal{F}(D'_{ab})$, then $a, b\in \widehat{F}$ and $\widehat{F}\backslash \{a,b\}\in \mathcal{F}(D)\cap \mathcal{F}(D'_{ab})$.
	\end{lemma}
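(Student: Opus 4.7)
The plan is to mirror the proof of Lemma~\ref{relation1} almost verbatim; only the toggling set changes. By Definition~\ref{def02}, one has
\[
\mathcal{F}(D'_{ab}) \;=\; \mathcal{F}(D)\,\triangle\,T, \qquad T \;:=\; \{F\cup\{a,b\}\mid F\in\mathcal{F}(D),\; F\subseteq E\backslash\{a,b\}\},
\]
and the elementary identity $X\triangle(X\triangle T)=T$ gives $\mathcal{F}(D)\triangle\mathcal{F}(D'_{ab})=T$. Thus the hypothesis $\widehat{F}\in\mathcal{F}(D)\triangle\mathcal{F}(D'_{ab})$ is equivalent to $\widehat{F}\in T$, and the rest of the argument is simply unpacking what belonging to $T$ means.

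First I would observe that every element of $T$ has the form $F\cup\{a,b\}$ for some $F\in\mathcal{F}(D)$ with $F\subseteq E\backslash\{a,b\}$. This immediately forces $a,b\in\widehat{F}$ and, setting $F:=\widehat{F}\backslash\{a,b\}$, gives $\widehat{F}\backslash\{a,b\}\in\mathcal{F}(D)$, which is half of the desired conclusion.

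It then remains to check that $\widehat{F}\backslash\{a,b\}$ also lies in $\mathcal{F}(D'_{ab})$. For this I would note that every set in $T$ contains both $a$ and $b$, so $\widehat{F}\backslash\{a,b\}$, containing neither, cannot be an element of $T$. Therefore it survives the symmetric difference $\mathcal{F}(D)\triangle T$, giving $\widehat{F}\backslash\{a,b\}\in\mathcal{F}(D'_{ab})$ and hence $\widehat{F}\backslash\{a,b\}\in\mathcal{F}(D)\cap\mathcal{F}(D'_{ab})$. No genuine obstacle is expected: the argument is pure bookkeeping with symmetric differences, and the only subtle point, exactly as in Lemma~\ref{relation1}, is verifying that the ``returned'' set $\widehat{F}\backslash\{a,b\}$ is not itself accidentally toggled out of $\mathcal{F}(D)$ — which is trivial here because the toggling set $T$ consists solely of subsets containing both $a$ and $b$.
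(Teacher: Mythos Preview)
Your proof is correct and follows essentially the same route as the paper's: both compute $\mathcal{F}(D)\triangle\mathcal{F}(D'_{ab})=T$ via the identity $X\triangle(X\triangle T)=T$, read off $a,b\in\widehat{F}$ and $\widehat{F}\backslash\{a,b\}\in\mathcal{F}(D)$ from membership in $T$, and then observe that $\widehat{F}\backslash\{a,b\}\notin T$ (since it lacks $a$ and $b$) to conclude it lies in $\mathcal{F}(D'_{ab})$ as well. The only difference is cosmetic: you name the toggling set $T$ and state the symmetric-difference identity explicitly, whereas the paper performs the same computation inline.
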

	
	\begin{proof}
		Since
		\begin{eqnarray*}
			\widehat{F}\in \mathcal{F}(D)\Delta \mathcal{F}(D'_{ab})&=&
			\mathcal{F}(D)\Delta (\mathcal{F}(D)\Delta \{F\cup \{a,b\}| F\in \mathcal{F}(D) ~and~ F\subseteq  E\backslash  \{a,b\}\})\\
			&=&\{F\cup \{a,b\}| F\in \mathcal{F}(D) ~and~ F\subseteq  E\backslash  \{a,b\}\},
		\end{eqnarray*}
		we have $a, b\in \widehat{F}$ and $\widehat{F}\backslash \{a, b\}\in  \mathcal{F}(D)$. Since $$\widehat{F}\backslash \{a,b\}\notin\{F\cup \{a,b\}| F\in \mathcal{F}(D) ~and~ F\subseteq  E\backslash  \{a,b\}\},$$ it follows that $\widehat{F}\backslash \{a,b\}\in  \mathcal{F}(D'_{ab})$. Therefore $\widehat{F}\backslash \{a,b\}\in  \mathcal{F}(D)\cap \mathcal{F}(D'_{ab})$.
	\end{proof}
	
	Based on the relationship between $\mathcal{F}(D)$ and $\mathcal{F}(D'_{ab})$ as shown in Lemma \ref{relation2}, we derive  a statement similar to Proposition \ref{1stpair}.

	\begin{proposition}\label{2ndpair}
		Let $A\subseteq E$ and $|A\cap\{a, b\}|=1$. Then $\omega(D\ast A)=\omega(D'_{ab}\ast A)$.
	\end{proposition}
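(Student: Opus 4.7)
The plan is to mirror the structure of the proof of Proposition \ref{1stpair}, swapping in Lemma \ref{relation2} for Lemma \ref{relation1}. Since the exchange handle ends is symmetric in $a, b$ (i.e.\ $D'_{ab}=D'_{ba}$), I may assume without loss of generality that $a\in A$ and $b\notin A$. I would then aim to establish the two equalities $r((D\ast A)_{min})=r((D'_{ab}\ast A)_{min})$ and $r((D\ast A)_{max})=r((D'_{ab}\ast A)_{max})$ separately; subtracting gives $\omega(D\ast A)=\omega(D'_{ab}\ast A)$.

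For the first equality, I would pick $F'\in \mathcal{F}(D'_{ab})$ with $r((D'_{ab}\ast A)_{min})=|F'\triangle A|$ and split into cases. If $F'\in \mathcal{F}(D)\cap \mathcal{F}(D'_{ab})$, then $F'$ itself is a valid candidate for $D\ast A$, giving the inequality immediately. Otherwise $F'\in \mathcal{F}(D'_{ab})\setminus\mathcal{F}(D)$, and Lemma \ref{relation2} supplies $a,b\in F'$ with $F'\setminus\{a,b\}\in \mathcal{F}(D)\cap \mathcal{F}(D'_{ab})$. The core computation is that, because $a\in A$ and $b\notin A$, removing $a$ from $F'$ puts $a$ \emph{into} the symmetric difference with $A$ while removing $b$ takes $b$ \emph{out} of it, so $|(F'\setminus\{a,b\})\triangle A|=|F'\triangle A|$. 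Either way $r((D\ast A)_{min})\le r((D'_{ab}\ast A)_{min})$. Using the involutivity of exchange handle ends (Proposition \ref{pro1}(2)) to exchange the roles of $D$ and $D'_{ab}$ yields the reverse inequality, and hence equality.

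The argument for $r((D\ast A)_{max})=r((D'_{ab}\ast A)_{max})$ proceeds in the same way, starting from a feasible set of $D'_{ab}$ that maximizes $|F'\triangle A|$ and using the same replacement $F'\mapsto F'\setminus\{a,b\}$ (which preserves the distance to $A$) together with involutivity. The only real subtlety is the cancellation identity $|(F'\setminus\{a,b\})\triangle A|=|F'\triangle A|$, and this is precisely where the hypothesis $|A\cap\{a,b\}|=1$ is used: if both or neither of $a,b$ lay in $A$, removing both elements would shift the symmetric difference by $\pm 2$ rather than cancelling, which is exactly why that regime was treated by Proposition \ref{1stpair} via the handle-sliding replacement instead. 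So the hypotheses of Propositions \ref{1stpair} and \ref{2ndpair} are complementary and each matches the right Vassiliev-type replacement.
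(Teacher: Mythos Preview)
Your proposal is correct and follows essentially the same argument as the paper's own proof: choose a minimizing (resp.\ maximizing) feasible set for $D'_{ab}\ast A$, use Lemma~\ref{relation2} to replace it by $F'\setminus\{a,b\}\in\mathcal{F}(D)$ when necessary, observe that $|A\cap\{a,b\}|=1$ makes this replacement distance-preserving, and then invoke the involutivity of exchange handle ends for the reverse inequality. The only cosmetic difference is your explicit WLOG reduction to $a\in A,\ b\notin A$, which the paper handles implicitly.
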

	
	\begin{proof}
		
		If there exists $F' \in \mathcal{F}(D'_{ab})\backslash \mathcal{F}(D)$ such that $r((D'_{ab}\ast A)_{min})=|F'\triangle A|$. Then $a, b\in F'$ and $F'\backslash \{a,b\} \in \mathcal{F}(D)\cap \mathcal{F}(D'_{ab})$ by Lemma \ref{relation2}. Therefore
		\[|(F'\backslash \{a,b\}) \triangle A|=|F'\triangle A|.\]
		Hence $$r((D\ast A)_{min})\leq|(F'\backslash \{a,b\}) \triangle A|=|F'\triangle A|= r((D'_{ab}\ast A)_{min}).$$
		Otherwise, there exists $F'\in \mathcal{F}(D'_{ab})\cap \mathcal{F}(D)$ such that $r((D'_{ab}\ast A)_{min})=|F'\triangle A|$.
		Then $$r((D\ast A)_{min})\leq |F'\triangle A|=r((D'_{ab}\ast A)_{min}).$$
		
		Since the exchange handle ends operation is involution, we have that $r((D'_{ab}\ast A)_{min})\leq r((D\ast A)_{min})$. Thus $$r((D'_{ab}\ast A)_{min})= r((D\ast A)_{min}).$$ By a similar argument, we show that $$r((D'_{ab}\ast A)_{max})= r((D\ast A)_{max}).$$ Therefore $$\omega(D'_{ab}\ast A)= \omega(D\ast A).$$
	\end{proof}
	
	\begin{definition}\cite{Lando}
		We say that an invariant $f$ of set systems satisfies the {\it four-term relation} if for any set system $D$ and a pair of distinct elements $a$ and $b$ in its ground set we have
		\begin{equation}\label{4term}
			f(D)+f(\widetilde{D'}_{ab})-f(D'_{ab})-f(\widetilde{D}_{ab})=0
		\end{equation}
	\end{definition}

Now we are in a position to state our main theorem.	
	
	\begin{theorem}\label{main}
		The twist polynomial of a set system satisfies the four-term relation.
	\end{theorem}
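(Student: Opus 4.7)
The plan is to expand the four-term expression as a single sum
\[
\sum_{A\subseteq E}\Bigl(z^{\omega(D\ast A)}+z^{\omega(\widetilde{D'}_{ab}\ast A)}-z^{\omega(D'_{ab}\ast A)}-z^{\omega(\widetilde{D}_{ab}\ast A)}\Bigr),
\]
and show that the integrand vanishes termwise for every $A\subseteq E$. To do this, I would partition the subsets $A\subseteq E$ according to the value of $|A\cap\{a,b\}|\in\{0,1,2\}$ and invoke Propositions \ref{1stpair} and \ref{2ndpair} in each case, applied not only to $D$ but also to the set systems obtained from $D$ by a handle slide or handle-end exchange.

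First, suppose $|A\cap\{a,b\}|\in\{0,2\}$. Applying Proposition \ref{1stpair} directly to $D$ gives $\omega(D\ast A)=\omega(\widetilde{D}_{ab}\ast A)$. Applying the same proposition to the set system $D'_{ab}$ (with the same pair $a,b$) and using Proposition \ref{pro1}(3) to identify $\widetilde{(D'_{ab})}_{ab}$ with $\widetilde{D'}_{ab}$, we also get $\omega(D'_{ab}\ast A)=\omega(\widetilde{D'}_{ab}\ast A)$. These two equalities make the bracketed expression above vanish for such $A$.

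Next, suppose $|A\cap\{a,b\}|=1$. Applying Proposition \ref{2ndpair} to $D$ yields $\omega(D\ast A)=\omega(D'_{ab}\ast A)$. Applying Proposition \ref{2ndpair} to $\widetilde{D}_{ab}$, together with the identification $(\widetilde{D}_{ab})'_{ab}=\widetilde{D'}_{ab}$ from Proposition \ref{pro1}(3), yields $\omega(\widetilde{D}_{ab}\ast A)=\omega(\widetilde{D'}_{ab}\ast A)$. Again the four monomials cancel in pairs.

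Because the integrand vanishes for every $A\subseteq E$, the total sum is zero, which is exactly the four-term relation (\ref{4term}) for $f=\partial_{\omega}$. The only subtlety I anticipate is bookkeeping: one must be careful that the hypotheses of Propositions \ref{1stpair} and \ref{2ndpair} (that the pair $\{a,b\}$ is the same pair used in the handle slide/exchange) are preserved when these propositions are applied to $D'_{ab}$ and $\widetilde{D}_{ab}$, and that the commutativity statement in Proposition \ref{pro1}(3) is what legitimises writing $\widetilde{D'}_{ab}$ unambiguously. No new combinatorial argument is needed beyond the lemmas already established.
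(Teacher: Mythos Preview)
Your proposal is correct and follows essentially the same approach as the paper's own proof: both partition the subsets $A\subseteq E$ according to $|A\cap\{a,b\}|$, apply Proposition~\ref{1stpair} (to $D$ and to $D'_{ab}$) when $|A\cap\{a,b\}|\in\{0,2\}$, apply Proposition~\ref{2ndpair} (to $D$ and to $\widetilde{D}_{ab}$) when $|A\cap\{a,b\}|=1$, and use the commutativity in Proposition~\ref{pro1}(3) to identify $\widetilde{(D'_{ab})}_{ab}=(\widetilde{D}_{ab})'_{ab}=\widetilde{D'}_{ab}$. The only cosmetic difference is that the paper records the case sums as auxiliary quantities $D_{i;00},D_{i;10},D_{i;01},D_{i;11}$, whereas you phrase the cancellation termwise for each individual $A$.
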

	
	\begin{proof}
		Let $D=(E,\mathcal{F})$ be a set system, and $a, b\in E$ with $a\neq b$.
		Let $D_1=D$, $D_2=D'_{ab}$, $D_3=\widetilde{D'}_{ab}$ and $D_4=\widetilde{D}_{ab}$.
		Then, it suffices to prove that \[\partial_{\omega_{D_{1}}}(z)+\partial_{\omega_{D_{3}}}(z)-\partial_{\omega_{D_{2}}}(z)-\partial_{\omega_{D_{4}}}(z)=0.\]
		For $i\in \{1, 2, 3, 4\}$, let $$D_{i;00}=\sum_{A\subseteq E\backslash  \{a,b\}}z^{\omega(D_{i}*A)},~~~~~~~
		D_{i;10}=\sum_{A\subseteq E\backslash  \{b\},~a\in A}z^{\omega(D_{i}*A)},
		$$
		$$D_{i;11}=\sum_{A\subseteq E,~\{a,b\}\in A}z^{\omega(D_{i}*A)},~~~~~~~
		D_{i;01}=\sum_{A\subseteq E\backslash  \{a\},~b\in A}z^{\omega(D_{i}*A)}.$$
	
		Then $$\partial_{\omega_{D_i}}(z)=D_{i;00}+D_{i;10}+D_{i;01}+D_{i;11}.$$
		By Proposition \ref{1stpair}, we obtain
		\[D_{1;00}=\sum_{A\subseteq E\backslash  \{a,b\}}z^{\omega(D_{1}*A)}=\sum_{A\subseteq E\backslash  \{a,b\}}z^{\omega(D_{4}*A)}= D_{4;00}\]
		and
		\[D_{1;11}=\sum_{A\subseteq E,~\{a,b\}\in A}z^{\omega(D_{1}*A)}=\sum_{A\subseteq E,~\{a,b\}\in A}z^{\omega(D_{4}*A)}= D_{4;11}.\]
		By $D_3=(\widetilde{D_{2}})_{ab}$ and Proposition \ref{1stpair}, we have
		\[D_{2;00}=\sum_{A\subseteq E\backslash  \{a,b\}}z^{\omega(D_{2}*A)}=\sum_{A\subseteq E\backslash  \{a,b\}}z^{\omega(D_{3}*A)}= D_{3;00}\]
		and
		\[D_{2;11}=\sum_{A\subseteq E,~\{a,b\}\in A}z^{\omega(D_{2}*A)}=\sum_{A\subseteq E,~\{a,b\}\in A}z^{\omega(D_{3}*A)}= D_{3;11}.\] 	
		By Proposition \ref{2ndpair}, it follows that
		\[D_{1;10}=\sum_{A\subseteq E\backslash  \{b\},~a\in A}z^{\omega(D_{1}*A)}=\sum_{A\subseteq E\backslash  \{b\},~a\in A}z^{\omega(D_{2}*A)}= D_{2;10}\]
		and
		\[D_{1;01}=\sum_{A\subseteq E\backslash  \{a\}, ~b\in A}z^{\omega(D_{1}*A)}=\sum_{A\subseteq E\backslash  \{a\},~b\in A}z^{\omega(D_{2}*A)}= D_{2;01}.\]
		By $D_3=(D_{4}')_{ab}$ and Proposition \ref{2ndpair}, we see that
		\[D_{4;10}=\sum_{A\subseteq E\backslash  \{b\},~a\in A}z^{\omega(D_{4}*A)}=\sum_{A\subseteq E\backslash  \{b\},~a\in A}z^{\omega(D_{3}*A)}= D_{3;10}\]
		and
		\[D_{4;01}=\sum_{A\subseteq E\backslash  \{a\},~b\in A}z^{\omega(D_{4}*A)}=\sum_{A\subseteq E\backslash  \{a\},~b\in A}z^{\omega(D_{3}*A)}= D_{3;01}.\]	
		Consequently, we can verify that 	$$\partial_{\omega_{D_{1}}}(z)+\partial_{\omega_{D_{3}}}(z)-\partial_{\omega_{D_{2}}}(z)-\partial_{\omega_{D_{4}}}(z)=0.$$
	\end{proof}
	
	\begin{example}
		Let $D=(\{a,b,c\}, \{\emptyset, \{b,c\}\})$. Then
		$${D'}_{ab}=(\{a,b,c\}, \{\emptyset,\{a,b\} , \{b,c\}\}),$$ $$\widetilde{D}_{ab}=(\{a,b,c\}, \{\emptyset,\{a,c\} , \{b,c\}\}),$$
		$$\widetilde{D'}_{ab}=(\{a,b,c\}, \{\emptyset,\{a,b\},\{a,c\},\{b,c\}\}).$$
		Moreover, we can calculate their twist polynomials as follows.
		\[\partial_{\omega_{{D}}}(z)=4z^2+4,~ ~  \partial_{\omega_{{\widetilde{D}_{ab}}}}(z)=6z^2+2,~~ \partial_{\omega_{{{D'}_{ab}}}}(z)=6z^2+2,~~ \partial_{\omega_{{\widetilde{D'}}_{ab}}}(z)=8z^2.\]
		Then we can verify that $$\partial_{\omega_{D}}(z)+\partial_{\omega_{\widetilde{D'}_{ab}}}(z)-\partial_{\omega_{D'_{ab}}}(z)-\partial_{\omega_{\widetilde{D}_{ab}}}(z)=0.$$
		Thus the twist polynomial of $D$ on $\{a,b\}$ satisfies four-term relation.
		The other cases can be verified similarly.
	\end{example}

	\section{Applied to delta-matroids and ribbon graphs}
	
	Moffatt and Mphako-Banda \cite{Moffatt} showed that a handle sliding applied to a delta-matroid can produce a set system that is not a delta-matroid. But both the first and the second Vassiliev moves preserve the class of binary delta-matroids as follows.
	
	\begin{proposition}\cite{Lando, Moffatt}\label{pro13}
		If $D=(E, \mathcal{F})$ is a binary delta-matroid, and $a, b\in E$ with $a\neq b$, then $\widetilde{D}_{ab}$ and $D'_{ab}$ are binary delta-matroids.
	\end{proposition}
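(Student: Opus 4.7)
The plan is to exhibit, for each of the two operations, a corresponding operation on the symmetric $GF(2)$-matrix representing (a twist of) $D$, and then verify that the resulting matrix represents (a twist of) the transformed set system. Since $D$ is binary, there exist $X \subseteq E$ and a symmetric $C$ over $GF(2)$ with $D * X = D(C)$. The reduction from $D$ to the case $D = D(C)$ is subtle because $'_{ab}$ and handle sliding do not, in general, commute with arbitrary twists; when $X \cap \{a, b\} = \emptyset$ one may check directly from Definitions~\ref{def01}--\ref{def02} that $D'_{ab} * X = (D * X)'_{ab} = D(C)'_{ab}$, and similarly for handle sliding, and the general case can be handled by replacing $C$ via Bouchet's principal-pivot theorem so as to shift $X$ to a set disjoint from $\{a, b\}$.

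For exchange handle ends, I would try $C' := C + E_{ab} + E_{ba}$, flipping the $(a,b)$ and $(b,a)$ entries; symmetry is preserved over $GF(2)$. When $\{a,b\} \not\subseteq A$, the $(a,b)$-entry is absent from $C[A]$, so $C'[A] = C[A]$ and feasibility is unchanged. When $\{a,b\} \subseteq A$, a multilinear expansion of $\det(C'[A])$ in rows $a$ and $b$, together with the symmetry of $C$, should yield
\[ \det C'[A] \;=\; \det C[A] + \det C[A \setminus \{a, b\}] \qquad (\text{over } GF(2)), \]
which toggles the feasibility of $A$ precisely when $A \setminus \{a, b\}$ is feasible in $D(C)$, matching Definition~\ref{def02} for $D'_{ab}$.

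For handle sliding I would try $C' := P C P^{T}$ with $P := I + E_{ab}$ (adding row $b$ to row $a$ and column $b$ to column $a$); symmetry is again preserved over $GF(2)$. When $a \notin A$ we have $C'[A] = C[A]$, and when $\{a, b\} \subseteq A$ we have $C'[A] = P[A] C[A] P[A]^{T}$, so in both cases the determinants, and hence the feasibilities, are unchanged, consistent with Proposition~\ref{1stpair}. The key case is $a \in A$, $b \notin A$, where I would aim to prove
\[ \det C'[A] \;=\; \det C[A] + \det C[(A \setminus \{a\}) \cup \{b\}] \qquad (\text{over } GF(2)), \]
which reproduces the toggling $\{F \cup \{a\} : F \cup \{b\} \in \mathcal{F}(D(C))\}$ from Definition~\ref{def01} that defines $\widetilde{D}_{ab}$.

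The main obstacles are (i) handling the twist reduction when $X$ intersects $\{a, b\}$, where the naive commutation $D'_{ab} * X = (D * X)'_{ab}$ fails and a principal-pivot argument is needed, and (ii) establishing the last determinant identity for handle sliding, where the row and column of $b$ that contribute to $C'[A]$ lie outside $A$. The latter requires a careful multilinear expansion exploiting the symmetry of $C$ over $GF(2)$, and is also precisely where the directional asymmetry between $\widetilde{D}_{ab}$ and $\widetilde{D}_{ba}$ becomes naturally visible in the matrix model.
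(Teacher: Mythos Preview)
The paper does not prove this proposition at all; it merely quotes it from \cite{Lando,Moffatt}. So there is no ``paper's own proof'' to compare against. Your outline is essentially the argument in those cited sources: represent a normalised twist of $D$ as $D(C)$ for a symmetric $GF(2)$-matrix $C$, implement exchange of handle ends by $C\mapsto C+E_{ab}+E_{ba}$ and handle sliding by $C\mapsto(I+E_{ab})\,C\,(I+E_{ab})^{T}$, and verify the two $GF(2)$-determinant identities you state. Both identities are correct; in each case the two ``mixed'' terms arising from multilinearity are determinants of mutually transposed matrices (by the symmetry of $C$) and hence cancel over $GF(2)$.

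Your obstacle~(i) is genuine, and the principal-pivot reduction does not work quite as you phrase it: to pivot so that the new twisting set avoids $\{a,b\}$ you would need a feasible set of $D$ disjoint from $\{a,b\}$, and none need exist (take $a$ a coloop). The clean fix is to compute how $*a$, $*b$, $*\{a,b\}$ conjugate the two moves: twisting by any $c\notin\{a,b\}$ commutes with both, while for instance
\[
(D*a)'_{ab}=\widetilde{D}_{ba}*a,\qquad
\widetilde{(D*b)}_{ab}=(D)'_{ab}*b,\qquad
\widetilde{(D*\{a,b\})}_{ab}=\widetilde{D}_{ba}*\{a,b\}.
\]
In particular $'_{ab}$ preserves binariness iff $\widetilde{\phantom{D}}_{ab}$ does (twists being free), and after peeling off the part of $X$ outside $\{a,b\}$, the finitely many remaining cases reduce either to your matrix identities on $D(C)$ directly or, when the relevant $1\times1$ or $2\times2$ block of $C$ is nonsingular, to a genuine principal pivot followed by your identities.
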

	
	Since the binary delta-matroids are special cases of set systems, we can deduce the following statement by Theorem \ref{main} and Proposition \ref{pro13}.

	\begin{theorem}
		The twist polynomial of a binary delta-matroid satisfies the four-term relation.
	\end{theorem}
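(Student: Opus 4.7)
The plan is to deduce the statement as an immediate consequence of Theorem \ref{main} once it has been checked that each of the four objects entering the four-term relation \eqref{4term} is a legitimate set system --- in fact a binary delta-matroid --- to which Theorem \ref{main} can be applied. No new combinatorial argument is needed; the statement is essentially a closure observation.

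First I would fix a binary delta-matroid $D=(E,\mathcal{F})$ and distinct elements $a,b\in E$, and list the four set systems involved: $D$, $\widetilde{D}_{ab}$, $D'_{ab}$ and $\widetilde{D'}_{ab}=\widetilde{(D'_{ab})}_{ab}$. By Proposition \ref{pro13}, both $\widetilde{D}_{ab}$ and $D'_{ab}$ are binary delta-matroids. Applying Proposition \ref{pro13} a second time, now to the binary delta-matroid $D'_{ab}$, yields that $\widetilde{(D'_{ab})}_{ab}=\widetilde{D'}_{ab}$ is binary as well. In particular all four objects are proper set systems whose twist polynomials are well-defined.

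Then I would simply invoke Theorem \ref{main} for the set system $D$ and the pair $\{a,b\}$: since the four-term relation holds for the twist polynomial of every proper set system, it holds in particular when the underlying set system happens to be a binary delta-matroid. The only mild point worth flagging is that Proposition \ref{pro13} must be used twice, because $\widetilde{D'}_{ab}$ is a composition of the two Vassiliev moves; beyond this bookkeeping step no real obstacle arises.
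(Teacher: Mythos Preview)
Your proposal is correct and matches the paper's own argument: the paper deduces the theorem directly from Theorem~\ref{main} together with Proposition~\ref{pro13}, exactly as you do. Your explicit remark that Proposition~\ref{pro13} must be applied twice to cover $\widetilde{D'}_{ab}$ is a harmless elaboration of the same idea.
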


	Ribbon graphs are well-known to be equivalent to cellularly embedded graphs. The reader is referred to \cite{EM} for further details about ribbon graphs. A \emph{quasi-tree} is a ribbon graph with one boundary component.

	Let $G=(V, E)$ be a ribbon graph and let $$\mathcal{F}:=\{F\subseteq E(G): \text{$F$ is the edge set of a spanning quasi-tree of $G$}\}.$$ We call $D(G)=:(E, \mathcal{F})$ the \emph{delta-matroid} \cite{CMNR} of $G$.
	
	\begin{lemma}\cite{Yan}\label{GD}
		Let $G = (V, E)$ be a ribbon graph. Then $$\partial_{\omega_{D(G)}}(z)=\partial_{\varepsilon_{G}}(z),$$
		
\noindent	where $\partial_{\varepsilon_{G}}(z)$ is the partial-dual polynomial of $G$.
	\end{lemma}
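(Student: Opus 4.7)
The plan is to reduce the identity to two ingredients. First, partial duality and the operation of forming the spanning-quasi-tree delta-matroid commute: for any $A\subseteq E(G)$ one has $D(G^A)=D(G)*A$, which is established in \cite{CMNR}. Second, for every ribbon graph $H$ the width of $D(H)$ equals the Euler genus $\varepsilon(H)$. Granted these, the proof becomes a one-line computation:
$$\partial_{\omega_{D(G)}}(z)=\sum_{A\subseteq E}z^{\omega(D(G)*A)}=\sum_{A\subseteq E}z^{\omega(D(G^A))}=\sum_{A\subseteq E}z^{\varepsilon(G^A)}=\partial_{\varepsilon_G}(z).$$

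To establish the second ingredient, I would first consider a connected ribbon graph $H$ with $v$ vertices. Every spanning tree of $H$, viewed as a ribbon subgraph, is topologically a disk, hence is a spanning quasi-tree of size $v-1$; moreover any spanning subgraph with fewer than $v-1$ edges is disconnected and so has at least two boundary components, so $r(D(H)_{\min})=v-1$. For any spanning quasi-tree $F$, the ribbon subgraph on edge set $F$ is a connected surface with a single boundary component and Euler genus $|F|-v+1$; since this is bounded above by $\varepsilon(H)$, we obtain $|F|\le v-1+\varepsilon(H)$. The reverse inequality follows from the classical fact that every connected ribbon graph has a spanning quasi-tree realising its full Euler genus (equivalently, every connected ribbon graph admits a partial dual that is a bouquet, and such a partial dual can be chosen so that the corresponding spanning quasi-tree attains the maximum possible size); see, e.g., \cite{EM}. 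Subtracting the two bounds yields $\omega(D(H))=\varepsilon(H)$, and for disconnected $H$ the same argument applies componentwise, both sides decomposing additively over the components.

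The step I expect to be the main obstacle is the lower bound on $r(D(H)_{\max})$, namely exhibiting a spanning quasi-tree of size $v-1+\varepsilon(H)$. The bouquet-partial-dual route is conceptually clean but relies on the known correspondence between spanning quasi-trees of $H$ and partial duals of $H$ having a single vertex, so one must make sure the partial dual chosen for this correspondence maximises $|A|$ rather than merely existing. A self-contained alternative would be to induct on $\varepsilon(H)$ using the behaviour of quasi-trees under edge deletion and contraction; this avoids citing external results but is significantly more technical, and the bookkeeping of boundary components under the two edge operations would need to be set up carefully.
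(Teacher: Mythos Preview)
The paper does not prove this lemma; it is quoted from \cite{Yan} without argument, so there is no in-paper proof to compare against. Your outline is nonetheless the standard route and is correct: reduce to the two facts $D(G^{A})=D(G)*A$ and $\omega(D(H))=\varepsilon(H)$, then sum over $A$.

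Your treatment of the second fact is fine up to the step you flag, but the ``main obstacle'' dissolves once you argue by duality rather than hunting for a maximum-genus quasi-tree. Having established $r(D(H)_{\min})=v(H)-1$ for connected $H$, apply that same statement to the geometric dual $H^{*}=H^{E}$. Since $D(H^{*})=D(H)*E$ (this is your first ingredient with $A=E$), the feasible sets of $D(H^{*})$ are exactly the complements in $E$ of the feasible sets of $D(H)$, so
\[
r(D(H)_{\max})\;=\;|E|-r(D(H^{*})_{\min})\;=\;e(H)-\bigl(v(H^{*})-1\bigr)\;=\;e(H)-f(H)+1.
\]
Subtracting gives $\omega(D(H))=e(H)-v(H)-f(H)+2=\varepsilon(H)$ by Euler's formula, and the disconnected case follows componentwise as you say. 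This replaces both of your proposed routes. Note also that your parenthetical ``equivalently'' is slightly off: $G^{A}$ being a bouquet is equivalent to $A$ being \emph{some} spanning quasi-tree, not one of maximal size, so as written that line only re-identifies $r(D(H)_{\max})$ without computing it; the duality argument above closes that gap cleanly.
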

	
	\begin{proposition}\cite{Lando,Moffatt}\label{slideab}
		Let $G = (V, E)$ be a ribbon graph, $a$ and $b$ be distinct edges of $G$ with neighbouring ends, let $\widetilde{G}_{ab}$ and $G'_{ab}$ be the ribbon graphs obtained from $G$ by handle sliding $a$ over $b$ and by exchange handle ends $a$ and $b$, respectively. Then
		\[D(\widetilde{G}_{ab})=\widetilde{D(G)}_{ab}.\]
		and \[D(G'_{ab})=D(G)'_{ab}.\]
	\end{proposition}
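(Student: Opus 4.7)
The plan is to translate both identities into statements about spanning quasi-trees and match them against the defining symmetric differences of $\widetilde{D(G)}_{ab}$ and $D(G)'_{ab}$. Recall that $F\subseteq E$ is feasible in $D(G)$ if and only if the spanning ribbon subgraph $(V,F)$ has exactly one boundary component. Unfolding the symmetric differences in Definitions \ref{def01} and \ref{def02}, each identity reduces to the following local-to-global claim: the number of boundary components of $(V,F)$ in the modified ribbon graph $\widetilde{G}_{ab}$ (resp.\ $G'_{ab}$) passes the one-boundary-component test in exactly the pattern prescribed by the delta-matroid operation. I would split the verification according to $|F\cap\{a,b\}|\in\{0,1,2\}$, mirroring the case analysis of Lemmas \ref{relation1} and \ref{relation2}.

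For the handle sliding identity, the key observation is that handle sliding $a$ over $b$ is a local surgery at the two neighbouring ends: the cyclic rotation around every other vertex and the ribbon structure away from those ends are left unchanged. Working with an arrow presentation of the ribbon graph (equivalently, a signed rotation system), I would trace the boundary walk through the affected neighbourhood and verify: (i) if $a\notin F$ or $b\in F$, the walks of $(V,F)$ in $G$ and in $\widetilde{G}_{ab}$ agree, so feasibility is preserved; and (ii) if $a\in F$ and $b\notin F$, the walk of $(V,F)$ in $\widetilde{G}_{ab}$ can be read off from the walks of $(V,F)$ and $(V,(F\setminus a)\cup b)$ in $G$ in a way that implements the XOR relation $\chi_{\widetilde{G}_{ab}}(F)=\chi_G(F)\oplus\chi_G((F\setminus a)\cup b)$ on the indicator of spanning quasi-trees. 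Together with the formula $\mathcal{F}(D(G))\triangle\mathcal{F}(\widetilde{D(G)}_{ab})=\{F\cup a : F\cup b\in\mathcal{F}(D(G)),\ F\subseteq E\setminus\{a,b\}\}$ from Definition \ref{def01}, this gives $D(\widetilde{G}_{ab})=\widetilde{D(G)}_{ab}$.

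The exchange-handle-ends identity is handled by a parallel analysis: swapping the neighbouring ends of $a$ and $b$ is again a local surgery, and the boundary walk of $(V,F)$ is unaffected whenever $|F\cap\{a,b\}|\neq 2$, so feasibility is preserved in these cases. When $F=F'\cup\{a,b\}$ with $F'\subseteq E\setminus\{a,b\}$, the local swap reroutes the walk through the affected region in such a way that $\chi_{G'_{ab}}(F'\cup\{a,b\})=\chi_G(F'\cup\{a,b\})\oplus\chi_G(F')$, which is exactly the flip encoded by the symmetric difference with $\{F\cup\{a,b\} : F\in\mathcal{F}(D(G)),\ F\subseteq E\setminus\{a,b\}\}$. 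The main obstacle in both parts is the careful topological bookkeeping of how the boundary walk threads through the local region where the surgery is performed, and in particular establishing the two XOR formulas above for the indicators of spanning quasi-trees; the neighbouring-ends hypothesis is essential here, since it guarantees that both moves are well-defined and act purely locally. Once the local picture is pinned down, the global identification of feasible sets follows by a routine case check.
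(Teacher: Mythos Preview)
The paper does not prove this proposition; it is quoted from \cite{Lando,Moffatt} and stated without argument, so there is no in-paper proof to compare against. Your outline is a reasonable route to an independent proof, and the reduction to the two XOR identities on the quasi-tree indicator is exactly the right unpacking of Definitions~\ref{def01} and~\ref{def02}.

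Two places would need tightening before this becomes a proof rather than a plan. First, in your case~(i) for handle sliding you assert that when $a,b\in F$ the boundary walks of $(V,F)$ in $G$ and in $\widetilde{G}_{ab}$ ``agree''. They do not agree as walks; what is true is that the two ribbon subgraphs differ by a genuine handle slide (both handles present), hence are homeomorphic as surfaces-with-boundary, and so have the same number of boundary components. The $a\notin F$ subcase is fine as stated, and your $|F\cap\{a,b\}|\le 1$ claim for the exchange move is correct once you observe that moving an end into the adjacent, now-vacant slot does not alter the cyclic order of the edges of $F$ at that vertex. Second, and more substantially, the XOR identities in your cases~(ii) are precisely the non-trivial content of the proposition, and you defer their verification entirely to ``careful topological bookkeeping''. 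The cited sources handle the analogous step via the $GF(2)$ matrix presentation of the delta-matroid of a bouquet, where the two Vassiliev moves become elementary row/column operations and the XOR structure drops out of a determinant identity; that route avoids the direct boundary-walk case analysis you propose.
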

	
	\begin{theorem}\cite{Chmutov2023,Deng}\label{DDJY}\label{three4T}
		The partial-dual polynomial as a function on framed chord diagrams satisfies three four-term relations. In particular, the partial-dual polynomial as a function on bouquets (one-vertex ribbon graphs) satisfies the four-term relation. 

	\end{theorem}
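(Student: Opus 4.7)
The plan is to derive this theorem as a corollary of Theorem~\ref{main} via the delta-matroid correspondence between bouquets and set systems. Since framed chord diagrams are in bijection with bouquets (one-vertex ribbon graphs), it suffices to establish the four-term relation for the partial-dual polynomial on bouquets; the framing data on the chords translates into the twist/orientation type of each ribbon edge in the bouquet.

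First I would fix a bouquet $G$ with a pair of distinct edges $a, b$, and invoke Lemma~\ref{GD} to identify
\[
\partial_{\varepsilon_G}(z) = \partial_{\omega_{D(G)}}(z).
\]
Next, I would apply Proposition~\ref{slideab} to transport the ribbon-graph Vassiliev moves to the delta-matroid side, noting in particular that
\[
D(\widetilde{G}_{ab}) = \widetilde{D(G)}_{ab}, \qquad D(G'_{ab}) = D(G)'_{ab}, \qquad D(\widetilde{G'}_{ab}) = \widetilde{D(G)'}_{ab}.
\]
The set system $D(G)$ is a binary delta-matroid, and by Theorem~\ref{main} its twist polynomial satisfies the four-term relation. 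Substituting the preceding identifications and re-applying Lemma~\ref{GD} to each term then yields the four-term relation for $\partial_{\varepsilon_G}(z)$ on bouquets, which translates back to one of the four-term relations on framed chord diagrams.

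For the assertion of \emph{three} four-term relations on framed chord diagrams, I would examine how the Vassiliev moves interact with the framing of each chord. The framing of a chord corresponds to the orientability type of the associated ribbon edge, and sliding or exchanging endpoints past each other may alter these framings in three distinguishable patterns. Each pattern produces a four-term relation among $\partial_{\varepsilon_{G_i}}(z)$ for an appropriate quadruple of bouquets $(G_1, G_2, G_3, G_4)$, and each is itself an instance of Theorem~\ref{main} after matching with the operations $\widetilde{(\cdot)}_{ab}$, $(\cdot)'_{ab}$, $\widetilde{(\cdot)'}_{ab}$ on the underlying delta-matroid. The principal obstacle I foresee is confirming that the three chord-diagrammatic relations correspond to three genuinely distinct instances of the set-system four-term relation rather than collapsing to the same one; once the framing/orientation bookkeeping is pinned down in the bijection between chord-endpoint moves and ribbon-edge operations, each of the three cases reduces to a direct application of Theorem~\ref{main} together with Proposition~\ref{slideab} and Lemma~\ref{GD}.
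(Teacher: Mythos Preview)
Your approach is essentially the same as the paper's: the paper derives Theorem~\ref{DDJY} from Theorem~\ref{main} by first using Proposition~\ref{slideab} to obtain equation~(\ref{deltamatroid}) at the delta-matroid level, and then using Lemma~\ref{GD} to translate it to equation~(\ref{ribbon}) at the ribbon-graph level. One small clarification: the ``obstacle'' you anticipate is not one. The paper notes explicitly that the three four-term relations on framed chord diagrams all \emph{unify} to the single equation~(\ref{ribbon}); they are three instances of one ribbon-graph identity (distinguished only by the framing/orientation pattern of $a$ and $b$), so collapsing to the same set-system relation is exactly what happens and is not a problem to be avoided.
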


Note that three four-term relations \cite{Deng} in Theorem \ref{three4T} can be unified as the following equation
			\begin{equation}\label{ribbon}
		\partial_{\varepsilon_{G}}+\partial_{\varepsilon_{\widetilde{G'}_{ab}}}-\partial_{\varepsilon_{G'_{ab}}}-\partial_{\varepsilon_{\widetilde{G}_{ab}}}=0.
	\end{equation}
	
	We now demonstrate that Theorem \ref{main} clearly implies Theorem \ref{DDJY}. Note that the following equation (\ref{deltamatroid}) can be verified by Proposition \ref{slideab} and Theorem \ref{main}. 
	And equation (\ref{deltamatroid}) implies equation (\ref{ribbon}) by Lemma \ref{GD}.
	\begin{equation}\label{deltamatroid} \partial_{\omega_{D(G)}}+\partial_{\omega_{D(\widetilde{G'}_{ab})}}-\partial_{\omega_{D(G'_{ab})}}-\partial_{\omega_{D(\widetilde{G}_{ab})}}=0.
	\end{equation}

	We say that an invariant $f$ of delta-matroids satisfies the Tutte relations \cite{Lando} if, for any delta-matroid $D=(E, \mathcal{F})$, we have
	\begin{eqnarray*}
		f(D)=\left\{\begin{array}{ll}
			xf(D \setminus e), & \mbox{if}~e~\mbox{is a loop of}~D\\
			y f(D / e), & \mbox{if}~e~\mbox{is a coloop of}~D\\
			pf(D \setminus e) + qf(D / e), & \mbox{otherwise}.
		\end{array}\right.
	\end{eqnarray*}
	Here $x, y, p$ and $q$ are some indeterminates.

	Lando and Zhukov \cite{Lando} proved that any invariant of binary delta-matroids satisfying the Tutte relation satisfies also the four-term relation.

	\begin{proposition}\cite{Lando}
		If $f$ is an invariant of binary delta-matroid satisfying the Tutte relation, then $f$ satisfies the four-term relation.
	\end{proposition}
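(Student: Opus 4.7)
The plan is to induct on $|E|$. Since $a,b\in E$ are required to be distinct, the base of the induction is $|E|=2$, and the inductive step applies once $|E|\geq 3$, where there is room to pick a ``neutral'' element $c\in E\setminus\{a,b\}$.

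For the inductive step, fix any such $c$ and apply the Tutte recursion to $c$ inside each of the four terms of
\[
f(D)+f(\widetilde{D'}_{ab})-f(D'_{ab})-f(\widetilde{D}_{ab}).
\]
Two compatibilities must hold for this to be a clean reduction. First, the status of $c$ as loop, coloop, or neither has to coincide in all four of $D$, $D'_{ab}$, $\widetilde{D}_{ab}$, $\widetilde{D'}_{ab}$, so that the same branch of the Tutte recursion is taken in each term; this follows by inspecting Definitions~\ref{def01} and~\ref{def02}, whose toggled families are parameterised by sets $F\subseteq E\setminus\{a,b\}$ and therefore cannot alter whether $c$ sits in all, none, or only some feasible sets. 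Second, both deletion and contraction of $c$ must commute with each Vassiliev move on $\{a,b\}$, yielding identities such as $\widetilde{(D\setminus c)}_{ab}=(\widetilde{D}_{ab})\setminus c$ and the three analogous ones; these are direct set-theoretic checks, with the coloop case of contraction requiring a short extra argument. Given these compatibilities, the Tutte recursion rewrites the four-term expression for $D$ as a fixed linear combination of the analogous four-term expressions for $D\setminus c$ and $D/c$, both of which are binary delta-matroids (deletion and contraction preserve binarity) with strictly smaller ground set, so the induction closes.

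The base case is $|E|=2$. Here no neutral element is available, so I would instead enumerate the binary delta-matroids on $\{a,b\}$---finitely many, coming from the eight symmetric $2\times 2$ matrices over $GF(2)$ together with their twists---and for each verify the identity by applying the Tutte recursion first to $a$ and then to $b$, reducing every $f(D_i)$ to an evaluation at a one-element or empty delta-matroid where the answer is forced by the Tutte recursion's initial data.

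The hard part is precisely this base case. Unlike the auxiliary $c$ used in the inductive step, the status of $a$ (or $b$) need \emph{not} be preserved by the Vassiliev moves: for instance, $a$ is a loop in $D=(\{a,b\},\{\emptyset\})$ but is neither a loop nor a coloop in $D'_{ab}=(\{a,b\},\{\emptyset,\{a,b\}\})$. Consequently the Tutte recursion applied to $a$ inside the four terms may take different branches, and the cancellation in the alternating sum has to be tracked through an explicit case analysis rather than falling out of a single uniform reduction.
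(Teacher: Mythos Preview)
The paper does not supply its own proof of this proposition; it is quoted verbatim from Lando--Zhukov \cite{Lando} and left unproved here. So there is nothing in the present paper to compare your argument against.

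On its own merits, your induction on $|E|$ is a reasonable route, and the two compatibility claims driving the inductive step are correct. That the loop/coloop/neither status of $c\in E\setminus\{a,b\}$ is unchanged by either Vassiliev move follows because the toggled families in Definitions~\ref{def01} and~\ref{def02} consist of sets of the form $F\cup\{a\}$ or $F\cup\{a,b\}$ with $F\subseteq E\setminus\{a,b\}$, and whether such a set contains $c$ is governed entirely by whether the underlying $F\cup\{b\}$ (resp.\ $F$) in $\mathcal{F}(D)$ does; the involution property then gives the converse direction. Commutation of deletion and contraction at $c$ with the moves on $\{a,b\}$ reduces, via $D/c=(D*c)\setminus c$, to the facts that (i) intersecting with $\{X:c\notin X\}$ commutes with symmetric difference, and (ii) twisting at $c$ commutes with both moves, the latter being a short bijection argument using $c\notin\{a,b\}$. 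Together with Proposition~\ref{pro13} (so that all four set systems stay binary) and the standard fact that minors of binary delta-matroids are binary, the inductive step closes.

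What you have written is, however, a sketch rather than a proof: the base case $|E|=2$ is only described, not executed. Your diagnosis is right that the Tutte branch taken at $a$ (or $b$) can differ across the four terms, so the verification really does require a case split over the binary delta-matroids on a two-element ground set; this is finite and mechanical, but until it is actually carried out the argument is incomplete.
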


	Let $D$ be a binary delta-matroid. Here is an examples to show that the twist polynomial of $D$ does not satisfy the Tutte relation.
	
	\begin{example}
		Let $E=\{a, b, c\}$,
		
		$$\mathcal{F}=\{\emptyset, \{b,c\}\},$$ and $$\mathcal{F}'= \{\emptyset,\{a,b\},\{a,c\},\{b,c\}\}.$$
		Then $D=(E, \mathcal{F})$ and $D'=(E, \mathcal{F}')$ are both binary delta-matroids.
		Suppose that the twist polynomial of binary delta-matroids satisfying the Tutte relation. We have \begin{equation}\label{equ2}
			\left\{
			\begin{aligned}
				4z^2+4=\partial_{\omega_{{D}}}(z)&=x\partial_{\omega_{{D-b}}}(z)+y\partial_{\omega_{{D/ b}}}(z)=4(x+y),\\
				8z^2=\partial_{\omega_{{D'}}}(z)&= x\partial_{\omega_{{D'-c}}}(z)+y\partial_{\omega_{{D'/ c}}}(z)=(x+y)(2z^2+2).\\
			\end{aligned}
			\right.
		\end{equation}
	So $32z^2=(4z^2+4)(2z^2+2)$, a contradiction.
		
	\end{example}
	
	\begin{remark}
		Note that we find an invariant (twist polynomial) of binary delta-matroids that does not satisfy the Tutte relation, but satisfies the four-term relation. Consequently, it produces a new concrete weight system.
	\end{remark}

	\section{Acknowledgements}
	This work is supported by NSFC (Nos. 12171402, 12101600), and partially supported by the Natural Science Foundation of Hunan Province, PR China (No. 2022JJ40418), the Excellent Youth Project of Hunan Provincial Department of Education, PR China (No. 23B0117), and the China Scholarships Council (Grant No. 202108430063). The first author thanks the National Institute of Education, Nanyang Technological University, where part of this research was performed.

\end{document}